\theoremstyle{definition}
\newtheorem{rmk}{Remark}[section]
\newtheorem{con}{Construction}[section]
\theoremstyle{plain}
\newtheorem{thm}{Theorem}[section]
\newtheorem{lm}{Lemma}[section]
\begin{document}
\author{Barbora Voln\'{a}}
\address{Mathematical Institute, Silesian University in Opava, \newline \indent Na Rybn\'{i}\v{c}ku 1, 746 01 Opava, Czech Republic}
\email{Barbora.Volna@math.slu.cz}
\keywords{Devaney chaos, continuous group action, uniform space, topological transitivity, sensitivity}
\subjclass[2010]{37B05, 54H11, 37D45, 34A60}

\title[Devaney chaos for continuous group action on Hausdorff uniform space]{On definition of Devaney chaos for a continuous group action on a Hausdorff uniform space}

\begin{abstract}
We show that the existence of a dense set of periodic points for a  topologically transitive non-minimal continuous group action on a Hausdorff uniform space with an infinite acting group does not necessarily imply a sensitive dependence to the initial conditions in such a system. This leads to define the chaos in the sense of Devaney for a continuous group action on a Hausdorff uniform spaces with an infinite acting group in the original way, i.e. a non-minimal topologically transitive and sensitive system with a dense set of periodic points is a chaotic system in the sense of Devaney.
\end{abstract}

\maketitle

\section{Introduction}

The chaos is one of the most studied terms in the field of the dynamical systems and of other related branches in mathematics. There exist many definitions of chaos in a dynamical system. The sensitivity on the initial conditions is the core of many of these definitions. We can find the mention about the sensitivity already in the familiar description of the Lorenz attractor from 1963, \cite{lorenz}. R.L. Devaney \cite{devaney} in 1989 defined the chaos in the following way. Let $V$ is a set. A map $f: V \rightarrow V$ is chaotic provided $f$ has a sensitive dependence on the initial conditions, $f$ is topologically transitive and $f$ has a dense set of periodic points. Today, the chaos defined in this way is said to be the Devaney chaos or the chaos in the sense of Devaney. Generally, we can see that such defined chaos is considered for a discrete-time dynamical system, or for a cascade, or one can say for a continuous group action with a finite acting group. In 1992 and 1993 in \cite{banks_brooks_cairns_davis_stacey}, \cite{glasner_weiss} and \cite{silverman} authors showed that for such systems the sensitivity follows from another two properties (i.e. from the topological transitivity and from the existence of a dense set of periodic points). And in \cite{glasner_weiss} authors pointed out the importance of the property of the non-minimality in such systems. So, we can say that a non-minimal topologically transitive discrete dynamical system with a dense set of periodic points is a chaotic system in the sense of Devaney.

Naturally, many authors react on such observations and try to extend these results for a continuous (semi-) group action with an infinite acting group or for a (semi-) flow on a metric space or on a Polish space  \cite{dai}, \cite{kontorovich_megrelishvili}, \cite{polo}, \cite{wang_long_fu}, \cite{wang_yin_yan}, or more general on a Hausdorff uniform space \cite{ceccherini-silberstein_coornaert}, \cite{dai_tang}, \cite{schneider_kerkhoff_behrisch_siegmund}. We focus on the definition of the Devaney chaos for a continuous group action on a Hausdorff uniform space with an infinite acting group. We provide an example of a continuous group action on a Hausdorff uniform space with an infinite acting group which is topologically transitive, has a dense set of periodic points, is not minimal and is not sensitive on the initial conditions.  Although in \cite{ceccherini-silberstein_coornaert}, \cite{dai_tang}, \cite{schneider_kerkhoff_behrisch_siegmund} authors declared the opposite, generally, the sensitivity does not follow from the topological transitivity and from the existence of a dense set of periodic points for a non-minimal continuous group action on a Hausdorff uniform space with an infinite acting group. We see that the definition of the Devaney chaos for such a continuous dynamical system should be remained the same as the original one.

\section{Main result}
 
We show that a non-minimal topologically transitive continuous group action on a Hausdorff uniform space with an infinite acting group and with a dense set of periodic points need not be sensitive on the initial conditions. For a discrete dynamical system and for a continuous dynamical system on a metric or metrizable space the topological transitivity, the existence of a dense set of periodic points and the minimality ensure the sensitivity on the initial conditions, see \cite{banks_brooks_cairns_davis_stacey}, \cite{dai}, \cite{glasner_weiss}, \cite{kontorovich_megrelishvili}, \cite{polo}, \cite{silverman}, \cite{wang_long_fu}, \cite{wang_yin_yan}. Many authors tried to prove the same property for a continuous group action on a Hausdorff uniform space with an infinite acting group, see e.g. \cite{ceccherini-silberstein_coornaert}, \cite{dai_tang}, \cite{schneider_kerkhoff_behrisch_siegmund}, but we provide an example of such a system where the sensitivity is not ensured by the topological transitivity, by the non-minimality and by the existence of a dense set of periodic points.

\begin{thm}
\label{thm_main_result}
There exists a continuous group action on a Hausdorff uniform space with an infinite acting group which
\begin{itemize}
\item is topologically transitive,
\item is not minimal,
\item has a dense set of periodic points,
\item is not sensitive to the initial conditions,
\item and is not equicontinuous.
\end{itemize}
\end{thm}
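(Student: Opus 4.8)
The plan is to exhibit a concrete example: a Hausdorff uniform space $(X,\mathcal{U})$, an infinite group $G$, and a continuous action $G\times X\to X$, and then to verify the five properties one at a time. A single observation already dictates the shape of the construction. By the results quoted in the introduction (Banks et al.\ and their extensions to the various acting groups), if the uniformity on $X$ were induced by a metric the remaining four hypotheses would already force sensitivity; hence $\mathcal{U}$ must be non-metrizable, i.e.\ admit no countable base of entourages. Indeed, the classical argument extracts the ``sensitivity constant'' from the uniform separation of two disjoint periodic orbits and then runs a Baire-type/diagonal argument over a countable base of neighbourhoods; in a general Hausdorff uniform space neither step is available, and the example must be arranged so that neither can be repaired.

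Concretely, I would first lay down a ``chaotic skeleton'': a topologically transitive system with a dense set of periodic points that also has at least one fixed point $p$, so that $\{p\}$ is a proper closed invariant set and non-minimality is settled at once. Topological transitivity and density of the periodic points are features of this skeleton that survive any change of uniformity inducing the same topology, so they can be checked directly. The real content lies in the uniformity: I would equip the skeleton with a carefully calibrated, non-metrizable $\mathcal{U}$ with the property that for \emph{each} entourage $D$ there is a point $x_D$ (typically a periodic point whose period is forced to grow as $D$ shrinks, or a point near $p$) possessing a neighbourhood $U_D$ with $g\,U_D\subseteq D[g\,x_D]$ for every $g\in G$ --- a ``$D$-stable point''. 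The existence of such an $x_D$ for every $D$ is exactly the negation of sensitivity. At the same time, because $\mathcal{U}$ has no countable cofinal family of entourages, the points $x_D$ cannot be forced to a single point that is $D$-stable simultaneously for all $D$, so no point is an equicontinuity point and the action is not equicontinuous --- yielding the last two items. This is precisely the mechanism that a metric phase space forbids.

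The step I expect to be the main obstacle is the simultaneous calibration of $\mathcal{U}$: it must be coarse enough that every entourage admits a stable point, yet fine enough to be Hausdorff, to induce exactly the intended (transitive, dense-periodic-points) topology, and to keep each $g\in G$ a uniform isomorphism so that the action is (uniformly, hence topologically) continuous. The delicate verifications are therefore that $\mathcal{U}$ is a genuine uniformity (filter, symmetry, the square/composition condition), that it is Hausdorff and induces the correct topology, that the candidate neighbourhoods $U_D$ are honestly moved within $D$ by the \emph{whole} group $G$ and not merely by a generating set, and --- since the conclusion runs against metric intuition --- that this genuinely produces non-sensitivity rather than merely defeating the classical proof. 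Once $X$ is correctly set up, the remaining checks of transitivity, of density of the periodic points, and of non-minimality are routine.
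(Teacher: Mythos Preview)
Your diagnosis that the uniformity must be non-metrizable is exactly right, and your abstract description of the target --- a transitive skeleton with dense periodic points, a proper closed invariant set, and a uniformity engineered so that sensitivity fails --- matches the paper's. But two things are off.

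First, and most importantly, you have over-engineered the separation of ``not sensitive'' from ``not equicontinuous''. You propose arranging that for each entourage $D$ there is a $D$-stable point $x_D$, with the $x_D$ genuinely varying so that \emph{no} point is an equicontinuity point. That is much harder than what is needed, and your justification (``because $\mathcal{U}$ has no countable cofinal family of entourages, the points $x_D$ cannot be forced to a single point'') is not an argument. ``Not equicontinuous'' only requires \emph{one} point at which equicontinuity fails; it is perfectly consistent with the existence of other points that \emph{are} equicontinuous. The paper exploits exactly this: it produces a single point $x_{b_2}$ which is an honest equicontinuity point --- every subbasic entourage is handled by the \emph{same} point with a neighbourhood small enough that all nearby solutions lie on the same periodic orbit and stay uniformly close for all time --- and this one point already kills sensitivity. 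Non-equicontinuity is then witnessed at a \emph{different} point $x_{b_0}$, whose every neighbourhood meets a second, shorter-period orbit that drifts away. So the mechanism is ``one equicontinuity point plus one non-equicontinuity point'', not ``entourage-dependent stable points with no equicontinuity point anywhere''.

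Second, your proposal never actually builds anything; the entire content of the theorem is the example, and you have only described the specifications it must meet. The paper's construction is concrete and quite different in flavour from your fixed-point skeleton: the phase space $D$ is a set of periodic solutions of a planar differential inclusion, consisting of two orbits $X$ (period $\tau$) and $X_0\subset\overline{X}$ (period $\tau/2$), with the natural shift $\mathbb{R}$-action. The uniformity is generated by the uncountable separating family of pseudometrics $d_t(x_1,x_2)=d(x_1(t),x_2(t))$, $t\in\mathbb{R}$; this is what makes it non-metrizable and makes each subbasic entourage depend on only a \emph{single} time $t$, which is precisely why a point whose whole neighbourhood stays close for all time defeats every entourage at once. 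Non-minimality comes from the proper closed invariant suborbit $X_0$ (there is no fixed point), every point is periodic, and transitivity is checked directly from the geometry.
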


Proof of this theorem is located in the section \ref{section_proofs}.

\section{Background, definitions and notations}

Firstly, we focus on the well-known definition and background of the uniform space, see e.g. \cite{dugundji}. Let $X$ be a set, let $U,V \subset X \times X$, let $\Delta_X :=\{ (x,x) \mid x \in X \}$ be the diagonal in $X \times X$, let $U^{-1}:=\{ (y,x) \mid (x,y) \in U \}$, let $U \circ V := \{ (x,z) \mid \exists y \textmd{ such that } (x,y) \in U \textmd{ and } (y,z) \in V  \}$. A \textit{uniform structure} in a set $X$ is a family $\mathcal{V}$ of subsets of $X \times X$ such that
\begin{enumerate}[(i)]
\item if $U \in \mathcal{V}$ then $\Delta_X \in \mathcal{V}$;
\item if $U_1,U_2 \in \mathcal{V}$ then there is a $V \in \mathcal{V}$ such that $V \subset U_1 \cap U_2$;
\item if $U \in \mathcal{V}$ then there is a $V \in \mathcal{V}$ such that $V \circ V^{-1} \subset U$;
\item if $U \in \mathcal{V}$ and $V \subset U$ then $V \in \mathcal{V}$.
\end{enumerate}
The element of $\mathcal{V}$ is called an \textit{entourage} of the uniform structure. A family of subsets of $X \times X$ satisfying the conditions (i), (ii), (iii) is called a \textit{uniformity}. For each $x \in X$ let $U[x]:=\{ y \mid (x,y) \in U \}$. One can say that two points are $U$-closed whenever $(x,y) \in U$, or equivalently, $y \in U[x]$. From any uniformity $\mathcal{V}$ in $X$ a topology $\mathcal{T}(\mathcal{V})$ in $X$ is derived by taking the family $\{ U[x] \mid U \in \mathcal{V}, x \in X \}$ as the basis of the topology $\mathcal{T}(\mathcal{V})$. A space $X$ with a topology $\mathcal{T}(\mathcal{V})$ derived from a uniformity is called a \textit{uniform space}. For a detailed exposition of uniform spaces see e.g. \cite{dugundji}.

Secondly, we concentrate on definitions and notations related to a \textit{continuous dynamical system} given by a continuous group action with an infinite acting group. Let $X$ be a set equipped with a group action
$$T: G \times X \rightarrow X$$
where $G$ is a group and 
\begin{enumerate}
\item $T(r,T(s,x))=T(rs,x)$ for $r,s \in G$ and $x \in X$,
\item $T(e,x)=x$ for $x \in X$ where $e$ is an identity element in $G$,
\end{enumerate}
see e.g. \cite{dai_tang}, \cite{schneider_kerkhoff_behrisch_siegmund}. We consider that $X$ is a topological space, a group action $T$ on $X$ is continuous and an acting group is infinite. According to \cite{ceccherini-silberstein_coornaert}, an action $T$ of a group $G$ on a topological space $X$ is continuous if for each $g \in G$ the map $x \mapsto T(g,x)$ is continuous on $X$. According to some authors, this system is called a \textit{topological flow}, see e.g. \cite{dai_tang}. The set $Tx:=\{ T(g,x) \mid g \in G \}$ is the \textit{orbit} of the point $x \in X$ and the set $S_T(x):=\{ g \in G \mid T(g,x)=x \}$ is the \textit{stabilizer} of $T$ at the point $x \in X$, see e.g. \cite{ceccherini-silberstein_coornaert}, \cite{schneider_kerkhoff_behrisch_siegmund}.

Finally, we recollect some definitions and background related to the properties of a group action. We say that a group action $T$ on $X$ is \textit{topologically transitive} if for every pair $(Y_1,Y_2)$ of non-empty open sets $Y_1,Y_2 \subseteq X$ there exist $g \in G$ such that $T(g,Y_1) \cap Y_2 \neq \emptyset$, see \cite{ceccherini-silberstein_coornaert}, \cite{dai_tang}, \cite{schneider_kerkhoff_behrisch_siegmund}. Let $T(g^{-1},Y_2):=\{ x \in X \mid T(g,x) \in Y_2 \}$. Since $T(g,Y_1 \cap T(g^{-1},Y_2))=T(g,Y_1) \cap Y_2$ it is equivalent to say that $Y_1 \cap T(g^{-1},Y_2) \neq \emptyset$ for some $g \in G$, see \cite{dai_tang}, \cite{kontorovich_megrelishvili}. We say that a group action $T$ on $X$ is \textit{minimal} if $Tx$ is dense in $X$ for every $x \in X$, i.e. $\overline{Tx}=X$ for each $x \in X$, see \cite{kontorovich_megrelishvili}, \cite{schneider_kerkhoff_behrisch_siegmund}. Now, we focus on the concept of the periodicity. According to \cite{schneider_kerkhoff_behrisch_siegmund}, a point $x \in X$ is said to be \textit{periodic} with respect to $T$ if the stabilizer $S_T(x)$ is right syndetic in G. And, a subset $S \subseteq G$ is called \textit{right syndetic} in G if there exist a compact subset $K \subseteq G$ such that $K^{-1}S:=\bigcup_{k \in K}k^{-1}S=G$ where $k^{-1}S:=\{ t \in G \mid kt \in S \}$. In \cite{dai_tang}, authors provide the definition of the periodicity which is much weaker then in \cite{schneider_kerkhoff_behrisch_siegmund}. According to \cite{dai_tang}, a point $x \in X$ is called \textit{periodic} with respect to $T$ if the stabilizer $S_T(x)$ is syndetic in $G$. And, a subset $S \subseteq G$ is called \textit{syndetic} in $G$ if there is a compact subset $K \subseteq G$ such that $Kt \cap S \neq \emptyset ~\forall t \in G$ where $Kt := \{ kt \mid k \in K \}$. At the end, we recall the definition of a sensitivity and an equicontinuity for the considered system. A group action $T$ on a uniform space $X$ \textit{has a sensitive dependence on the initial conditions}, more briefly a group action is \textit{sensitive}, provided there is an entourage $U \in \mathcal{V}$ such that for all point $x \in X$ and every neighbourhood $Y$ of $x$ in $X$ there exist a point $y \in Y$ and an element $g \in G$ such that $(T(g,x),T(g,y)) \notin U$, \cite{ceccherini-silberstein_coornaert}, \cite{dai_tang}, \cite{schneider_kerkhoff_behrisch_siegmund}. Such an entourage is called a \textit{sensitivity entourage}, see \cite{ceccherini-silberstein_coornaert}. Contrarily, a group action $T$ on a uniform space $X$ is called \textit{equicontinuous at a point} $x \in X$ if for any entourage $U \in \mathcal{V}$ there exists a neighbourhood $Y$ of $x$ in $X$ such that $T(g,Y) \subseteq U[T(g,x)]$ for all $g \in G$. A group action $T$ on a uniform space $X$ is \textit{equicontinuous} if $T$ is equicontinuous at each point $x \in X$, see e.g. \cite{schneider_kerkhoff_behrisch_siegmund}.

\section{Proof}
\label{section_proofs}

In this section, we give an example of a continuous group action on a Hausdorff uniform space with an infinite acting group which is topologically transitive, is not minimal, has a dense set of periodic points, is not sensitive to the initial conditions and is not equicontinuous.

\begin{con}
\label{con_system}
We consider the differential inclusion in $\mathbb{R}^2$ given by
\begin{equation}
\label{eq_dif_inclusion}
\dot x \in \{f_1(x),f_2(x)\}
\end{equation}
where $\{f_1,f_2\}$ is a set-valued map that associates a set $\{f_1(x),f_2(x)\}$ with every point $x \in \mathbb{R}^2$ and $f_{1,2}:\mathbb{R}^2 \rightarrow \mathbb{R}^2$ are linear functions, see also \cite{smirnov}, \cite{volna_2}. A solution of such a differential inclusion is an absolutely continuous function $x: \mathbb{R} \rightarrow \mathbb{R}^2$ such that
\begin{equation}
\dot x(t) \in \{ f_1(x(t)),f_2(x(t)) \} \textrm{ a.e.}
\end{equation}
where $t \in \mathbb{R}$, \cite{smirnov}. The set of all solutions of (\ref{eq_dif_inclusion}) contains the solutions of the one branch $\dot x = f_1(x)$ or $\dot x = f_2(x)$, or the so-called "switching" solutions constructed by jumping from the integral curves generated by $f_1$ to the integral curves generated by $f_2$, and vice versa, in some points from $\mathbb{R}^2$, see also \cite{volna_2}. Moreover, we consider that the functions $f_{1,2}$ are such that the singular points $a^*_i$ of the differentials equation given by
\begin{equation}
\dot x = f_i(x)
\end{equation}
for $i=1,2$ are stable nodes and $a^*_1 \neq a^*_2$. Let $\varphi$, $\psi$ denote the flows generated by $f_1$, $f_2$, respectively. We depict this situation in Figure \ref{fig_construction}.
\begin{figure}[ht]
  \centering
  \includegraphics[height=2cm]{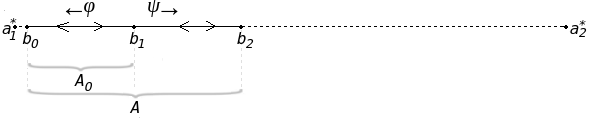}
  \caption{}
  \label{fig_construction}
\end{figure}
We can see the plane objects: singular points $a^*_1 \neq a^*_2$, the line segments with arrows representing the trajectories of the flows $\varphi$ and $\psi$. Further, we consider only the region $A$ depicted by the line segment with endpoints $b_0$ and $b_2$ located in the neighbourhood of the point $a_1$, see also \cite{volna_1}. Further, there is the line segment $A_0 \subset A$ with endpoints $b_0$ and $b_1$ with half length than $A$. Then we consider the set $D$ containing only the following switching solutions of (\ref{eq_dif_inclusion}).
\begin{itemize}
\item The set of solutions denoted by $X$ includes all solutions $x$ with the initial condition $x(0)=a$ for all $a \in A$ such that it firstly follows the integral curve generated by $f_2$ to the point $b_2$ then in this point it switches to the integral curve generated by $f_1$ and follows this integral curve until $b_0$ where it switches to the integral curve generated by $f_2$ and so on for $t \rightarrow \pm \infty$, or vice versa, i.e. it starts following the integral curve generated by $f_1$ to the point $b_0$.
\item The set of solutions denoted by $X_0$ includes all solutions $x_0$ with the initial condition $x_0(0)=a_0$ for all $a_0 \in A_0$ such that it firstly follows the integral curve generated by $f_2$ to the point $b_1$ then in this point it switches to the integral curve generated by $f_1$ and follows this integral curve until $b_0$ where it switches to the integral curve generated by $f_2$ and so on for $t \rightarrow \pm \infty$, or vice versa, i.e. it starts following the integral curve generated by $f_1$ to the point $b_0$.
\end{itemize}
We can see that $D = X \cup X_0$ and all considered solutions in $D$ are periodic. Let $\tau$ denote the period of $x \in X$. Then, the period of $x_0 \in X_0$ is $\frac{\tau}{2}$. Now, we define the topology on the set $D$ in the following way. Let $x_1, x_2 \in D$, let $t \in \mathbb{R}$ and let $d$ be the Euclidean metric in $\mathbb{R}^2$. We define the family of pseudometrics (or gauges)
\begin{equation}
\label{eq_family_of_pseudometrics}
\mathcal{D} := \{ d_t \mid t \in \mathbb{R} \}
\end{equation}
so that
\begin{equation}
d_t(x_1,x_2):=d(x_1(t),x_2(t)).
\end{equation}
The family of pseudometrics (\ref{eq_family_of_pseudometrics}) on the set $D$ is separating, i.e. for each pair of points $x_1 \neq x_2$ there is a $d_{\alpha} \in \mathcal{D}$ such that $ d_{\alpha}(x_1,x_2) \neq 0$. Let $B(y,d_t,\epsilon)$ denote the open ball of the radius $\epsilon$ centred at the point $y$ corresponding to the pseudometric $d_t$. So, we consider the topology $\mathcal{T}(\mathcal{D})$ in $D$ induced by the family of the pseudometrics $\mathcal{D}$, i.e. it is the topology having for a subbasis the family of the balls $\{ B(y,d_t,\epsilon) \mid y \in D, d_t \in \mathcal{D}, \epsilon > 0 \}$. Since the family of the pseudometrics $\mathcal{D}$ is separating our considered topology $\mathcal{T}(\mathcal{D})$ on the set $D$ is Hausdorff. We can say that the separating family $\mathcal{D}$ of gauges $d_t$ for $t \in \mathbb{R}$ is the \textit{gauge structure} for $D$ with the considered topology $\mathcal{T}(\mathcal{D})$ and this topological space is the \textit{gauge space}. For a detailed exposition of the gauge spaces see \cite{dugundji}. The family of the sets
\begin{equation}
\label{construction_uniformity}
\{(x_1,x_2) \subset D \times D \mid d_t(x_1,x_2) < \epsilon \}
\end{equation}
for all $d_t \in \mathcal{D}$ and $\epsilon > 0$ is the uniformity in $D$ determined by $\mathcal{D}$, see also \cite{dugundji}. So, $D$ is the uniform space. Now, we consider the natural $\mathbb{R}$-action on $D$
\begin{equation}
T: \mathbb{R} \times D \rightarrow D
\end{equation} 
where $T(t,y)=z$, $z(s)=y(t+s)$ for all $s \in \mathbb{R}$, see also \cite{volna_2}. We see that considered $(\mathbb{R},+)$ is the infinite group and $T$ is the group action on the Hausdorff uniform space $D$. Furthermore, the group action $T$ on $D$ is continuous. Obviously, for each $t \in \mathbb{R}$ the map $y \mapsto T(t,y)$ is continuous corresponding to the topology $\mathcal{T}(\mathcal{D})$.  
\end{con}

\begin{lm}
\label{lm_topologically_transitive}
Let $D$ be the Hausdorff uniform space specified above and $T$ be the continuous $\mathbb{R}$-action on $D$ specified above. Then $T$ is topologically transitive.
\end{lm}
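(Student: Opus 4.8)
The plan is to verify the open–set criterion: given non‑empty open $Y_1,Y_2\subseteq D$, produce $g\in\mathbb{R}$ with $T(g,Y_1)\cap Y_2\neq\emptyset$. Since the gauges $d_t$ generate $\mathcal{T}(\mathcal{D})$ through the subbasis of balls $B(y,d_t,\epsilon)$, I would first shrink $Y_1,Y_2$ to neighbourhoods of the form $\bigcap_i B(p,d_{s_i},\epsilon)$ and $\bigcap_j B(q,d_{r_j},\epsilon)$ with $p\in Y_1$, $q\in Y_2$; replacing $p$ by a time–translate I may assume all $s_i,r_j\ge 0$, and after enlarging the two finite lists, that they lie in one set $0\le u_1<\dots<u_\ell$. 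Thus it suffices to find $y\in D$ and $g\in\mathbb{R}$ with $d\bigl(y(u_k),p(u_k)\bigr)<\epsilon$ for the $u_k$ coming from $Y_1$ and $d\bigl(y(g+u_k),q(u_k)\bigr)<\epsilon$ for those coming from $Y_2$.

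Next I would record a continuity remark used throughout: every solution in $D$ is a continuous function of time, so $t\mapsto T(t,y)$ is continuous from $\mathbb{R}$ into $(D,\mathcal{T}(\mathcal{D}))$ for each $y$, whence the $T$–orbit of any point enters every neighbourhood of every point lying on the same periodic trajectory. This settles the two ``pure'' cases at once: if $p$ and $q$ lie on a common periodic trajectory (in particular if both lie in $X$, or both in $X_0$), then with $y=p$ there is $g$ with $T(g,p)\in Y_2$ while $p\in Y_1$.

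The heart of the matter is the mixed case, where one of $p,q$ lies in $X$ and the other in $X_0$. For this I would show that one of the two families is dense in $D$ — concretely that every solution in $X_0$ lies in $\overline{X}$ (so that any point of $X$ in fact has dense $T$–orbit). Fix $x_0\in X_0$ and a basic neighbourhood $\bigcap_{k=1}^{\ell}B(x_0,d_{u_k},\epsilon)$; one must produce $x\in X$ with $d\bigl(x(u_k),x_0(u_k)\bigr)<\epsilon$ for all $k$. Here I would use the geometry of Construction~\ref{con_system}: along the shared sub‑segment $A_0\subseteq A$ and along the $f_1$–arc ending at the common point $b_0$, a solution of $X_0$ and a suitably phased solution of $X$ are governed by the same linear field, started from the same point, hence \emph{coincide} there; and because the period $\tfrac{\tau}{2}$ of the $X_0$–solutions is exactly half the period $\tau$ of the $X$–solutions, the phase of $x\in X$ can be chosen so as to line these common pieces up with $x_0$ at the prescribed instants $u_k$. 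The remaining instants are absorbed into $\epsilon$ via continuous dependence of the flows $\varphi,\psi$ on initial conditions together with the stable–node structure near $a_1^{\ast}$, which keeps the relevant integral‑curve arcs $C^0$–close. Granting this, $\overline{X}=D$, every non‑empty open set meets $X$, and the second paragraph finishes the proof.

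The step I expect to be the real obstacle is this last one: forcing a single periodic function of period $\tau$ to track, simultaneously at a prescribed finite list of times, a periodic function of period $\tfrac{\tau}{2}$, using only the one‑parameter family of available phases. The mechanism that makes it work — and that has to be read off carefully from Figure~\ref{fig_construction} — is the genuine overlap of the two trajectories over $A_0$ (and the $f_1$–arc terminating at $b_0$) combined with the commensurability $\tau=2\cdot\tfrac{\tau}{2}$; continuous dependence on initial data then upgrades ``exact agreement on an arc plus small errors elsewhere'' to the required $\epsilon$–approximation. Everything else — reduction to subbasic neighbourhoods, time translation, the pure cases, and the continuity of $t\mapsto T(t,\cdot)$ — is routine.
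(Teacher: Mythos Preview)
Your route and the paper's diverge. The paper argues by a four–way case split on whether $Y_1,Y_2$ sit in $X$, in $X_0$, or straddle both. In the mixed cases it exploits the concrete geometric fact that the planar image of the $X_0$–trajectory is contained in that of the $X$–trajectory (both sweep out subsegments of the same line segment $A$): given $y_1\in X$, $y_2\in X_0$ and a \emph{single} time $t$, one picks $s$ with $y_1(t+s)=y_2(t)$ exactly, so $T(s,y_1)\in B(y_2,d_t,\epsilon)$. This is considerably lighter than what you attempt, because the paper never matches at several times simultaneously --- only at one.

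The price the paper pays is that its argument, as written, works inside a \emph{single} subbasic ball $B(y_2,d_t,\epsilon)\subseteq Y_2$; since the balls $B(\cdot,d_t,\epsilon)$ are declared only a subbasis for $\mathcal{T}(\mathcal{D})$, a general open $Y_2$ is only guaranteed to contain a finite intersection of such balls. You noticed this and work from the outset with finite intersections, which pushes you toward the stronger statement $\overline{X}=D$ in the full gauge topology (a claim the paper also makes, but in the non–minimality lemma rather than here). Your reduction --- once $X$ is dense, both $Y_i$ meet the single orbit $X$, and a time–shift carries a point of $Y_1\cap X$ exactly onto a point of $Y_2\cap X$ --- is clean and dispenses with the case analysis altogether.

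The step you flag as the ``real obstacle'' is indeed where the content lies in your approach: producing one phase $s$ that makes a $\tau$–periodic trajectory $\epsilon$–approximate a $\tfrac{\tau}{2}$–periodic one at a prescribed finite list of instants is an over–determined problem on its face. The ingredients you name (exact coincidence on the shared $f_2$–arc issuing from $b_0$, the $2{:}1$ period relation, continuous dependence near the stable node) are the right ones, but your sketch does not yet explain why a single $s$ handles all $u_k$ simultaneously; that piece would need to be written out. The paper's one–time matching sidesteps this difficulty entirely, at the cost of treating only the subbasic case.
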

\begin{proof}
Let $Y_1,Y_2 \subseteq D$ be non-empty open. Then, there are four possible situations specified below.
\begin{enumerate}[(a)]
\item $Y_1,Y_2 \subseteq X$, or $Y_1, Y_2 \subseteq X_0$. Then $Y_1,Y_2$ lies in the same orbit of each $x \in Y_1 \cup Y_2$. So, there exists $s \in \mathbb{R}$ such that $T(s,Y_1) \cap Y_2 \neq \emptyset$.
\item $Y_1 \subseteq X$ and $Y_2 \subseteq X_0$. Let $y_1 \in Y_1$ and $y_2 \in Y_2$, arbitrary. Pick $d_t \in \mathcal{D}$ and $\epsilon>0$ such that $B(y_2,d_t,\epsilon) \subseteq Y_2$. Then, there exist $s \in \mathbb{R}$ such that $d(y_1(t+s), y_2(t))=0$. Let $z:=T(s,y_1)$, then $y_1(t+s)=z(t)$ and $0=d(z(t),y_2(t))=d_t(z,y_2)=d_t(T(s,y_1),y_2)<\epsilon$. This implies that $T(s,y_1) \in Y_2$. And so, $T(s,Y_1) \cap Y_2 \neq \emptyset$. 
\item $Y_1 \subseteq X_0$ and $Y_2 \subseteq X$.  Let $y_1 \in Y_1$ and $y_2 \in Y_2$, arbitrary. Pick $d_t \in \mathcal{D}$ and $\epsilon>0$ such that $B(y_1,d_t,\epsilon) \subseteq Y_1$. Then, there exist $k \in \mathbb{R}$ such that $d(y_1(t), y_2(t+k))=0$. Let $z:=T(k,y_2)$, then $y_2(t+k)=z(t)$. Let $s:=-k$. Then $T(s^{-1},Y_2)=\{x \in D \mid T(s,x) \in Y_2 \}$. So, $z \in T(s^{-1},Y_2)$ since $T(s,z)=T(-k,z)=T(-k,T(k,y_2))=y_2$ and $y_2 \in Y_2$. Also, $z \in Y_1$ since $0=d(y_1(t),z(t))=d_t(y_1,z)<\epsilon$. This implies that $Y_1 \cap T(s^{-1},Y_2) \neq \emptyset$.
\item $Y_1 \cap X \neq \emptyset$ and $Y_1 \cap X_0 \neq \emptyset$ and $Y_2 \cap X \neq \emptyset$ and $Y_2 \cap X_0 \neq \emptyset$. Then if we pick $y_1 \in Y_1 \setminus X_0$ and $y_2 \in Y_2 \setminus X_0$ or $y_1 \in Y_1 \setminus X$ and $y_2 \in Y_2 \setminus X$ it follows to the situation (a). If we pick $y_1 \in Y_1 \setminus X_0$ and $y_2 \in Y_2 \setminus X$ it follows to the situation (b). And finally, if we pick $y_1 \in Y_1 \setminus X$ and $y_2 \in Y_2 \setminus X_0$ it follows to the situation (c).
\end{enumerate}
\end{proof}

\begin{lm}
\label{lm_non-minimality}
Let $D$ be the Hausdorff uniform space specified above and $T$ be the continuous $\mathbb{R}$-action on $D$ specified above. Then $T$ is not minimal.
\end{lm}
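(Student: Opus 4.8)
\noindent
The plan is to exhibit a single point of $D$ whose orbit under $T$ is not dense; by the definition of minimality recalled above this is exactly what is needed. The natural candidate is any solution $x_0 \in X_0$, since by Construction~\ref{con_system} the $X_0$-solutions are geometrically ``smaller'' than the $X$-solutions: an $X_0$-solution swings out only to the point $b_1$ (the midpoint of $A$), whereas the solutions in $X$ reach all the way to the endpoint $b_2$ of $A$.

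First I would fix the solution $y \in X$ with initial condition $y(0) = b_2$; this is legitimate because $b_2 \in A$ and $X$ contains a solution through every point of $A$. Next, since every solution in $X_0$ has its trajectory contained in a fixed compact subset $K_0 \subset \mathbb{R}^2$ which does not contain $b_2$, the number $\delta := d(b_2, K_0)$ is strictly positive. Consider the subbasic open set $U := B(y, d_0, \delta) = \{ z \in D \mid d(z(0), b_2) < \delta \}$, which is non-empty because $y \in U$.

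Then I would verify that $U$ is disjoint from the orbit of every $x_0 \in X_0$: for each $t \in \mathbb{R}$ the point $(T(t,x_0))(0) = x_0(t)$ lies on the trajectory of $x_0$, hence in $K_0$, so $d\big((T(t,x_0))(0), b_2\big) \geq \delta$ and therefore $T(t,x_0) \notin U$. Thus $Tx_0 \subseteq D \setminus U$; since $U$ is open, $D \setminus U$ is closed, so $\overline{Tx_0} \subseteq D \setminus U \subsetneq D$. Consequently $\overline{Tx_0} \neq D$ for this $x_0$, and $T$ is not minimal.

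The only genuinely non-routine point is the geometric claim that the trajectories of the $X_0$-solutions stay uniformly bounded away from $b_2$; this must be read off from the construction (the switching solutions in $X_0$ are built so as to turn around at $b_1$ and never reach $b_2$), and it is precisely what makes the chosen open neighbourhood $U$ of $y$ avoid all of $X_0$, and a fortiori the orbit closure of any point of $X_0$. Everything else — that $U$ is open and non-empty, that orbits consist of time-translates $s \mapsto x_0(t+s)$, and that a set disjoint from an open set has closure still disjoint from it — is immediate.
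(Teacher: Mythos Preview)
Your proof is correct and follows essentially the same approach as the paper: exhibit a point $y\in X$ whose $d_0$-ball misses the orbit of any $x_0\in X_0$, hence $\overline{Tx_0}\neq D$. The paper chooses $y$ with $y(0)=b\in A\setminus A_0$ (you specialise to $b=b_2$) and, after noting $Tx_0=X_0$, uses that elements of $X_0$ have initial value in $A_0$; you instead argue directly that the full trajectory $\{x_0(t):t\in\mathbb{R}\}$ avoids $b_2$, which is the same geometric fact read off from Construction~\ref{con_system}.
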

\begin{proof}
It is obvious that the orbit of each $x \in X$ is the whole set $X$ and the orbit of each $x_0 \in X_0$ is the whole set $X_0$. Further, we see that $\overline{X}=X \cup X_0=D$. Let $y \in X$ be such that $y(0)=b$ where $b \in A \setminus A_0$, see Figure \ref{fig_construction}. Then  $B(y,d_0,\epsilon) \cap X_0 = \emptyset$ for sufficiently small $\epsilon >0$. This implies that each $y \notin \overline{X_0}$. So, $\overline{Tx_0} \neq D$ for all $x_0 \in X_0$.
\end{proof}

\begin{lm}
\label{lm_dense_set_of_periodic_points}
Let $D$ be the Hausdorff uniform space specified above and $T$ be the continuous $\mathbb{R}$-action on $D$ specified above. Then $T$ has a dense set of periodic points.
\end{lm}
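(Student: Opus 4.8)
The plan is to observe that in this construction \emph{every} point of $D$ is a periodic point of $T$, so that the set of periodic points is all of $D$ and density is then immediate. Recall from Construction \ref{con_system} that $D = X \cup X_0$ and that every solution belonging to $D$ is a periodic solution of the differential inclusion: each $x \in X$ has period $\tau$ and each $x_0 \in X_0$ has period $\tau/2$.

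First I would fix an arbitrary $x \in D$ with minimal period $p$ (so $p = \tau$ if $x \in X$ and $p = \tau/2$ if $x \in X_0$) and compute the stabilizer $S_T(x) = \{ t \in \mathbb{R} \mid T(t,x) = x \}$. Since $T(t,x) = x$ means $x(t+s) = x(s)$ for all $s \in \mathbb{R}$, and $x$ has minimal period $p$, this forces $S_T(x) = p\mathbb{Z}$. Next I would check that $p\mathbb{Z}$ is right syndetic in $G = (\mathbb{R},+)$; since $\mathbb{R}$ is abelian this simultaneously yields syndeticity in the weaker sense of \cite{dai_tang}. Taking the compact set $K := [0,p]$ and using that for an abelian group $k^{-1}S = \{ t \mid k + t \in S \} = -k + S$, one gets $K^{-1}(p\mathbb{Z}) = \bigcup_{k \in [0,p]} (-k + p\mathbb{Z}) = \mathbb{R}$, so $p\mathbb{Z}$ is right syndetic; equivalently $Kt \cap p\mathbb{Z} \neq \emptyset$ for every $t \in \mathbb{R}$, so it is syndetic as well. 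Hence $x$ is periodic with respect to $T$ in the sense of both \cite{schneider_kerkhoff_behrisch_siegmund} and \cite{dai_tang}.

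Since $x \in D$ was arbitrary, the set of periodic points of $T$ equals $D$, which is trivially dense in itself, and the lemma follows. I do not expect a genuine obstacle here: the only points requiring a little care are the elementary bookkeeping of the (right) syndetic condition for $p\mathbb{Z}$ in $\mathbb{R}$, and the remark that the commutativity of $\mathbb{R}$ collapses the distinction between the two cited notions of periodicity, so that whichever definition the reader adopts, every orbit in $D$ consists of periodic points.
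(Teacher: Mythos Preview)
Your proposal is correct and follows essentially the same route as the paper: compute the stabilizer of a point as $p\mathbb{Z}$ (with $p=\tau$ or $p=\tau/2$), then verify (right) syndeticity in $(\mathbb{R},+)$ via the compact set $K=[0,p]$, checking both the definition from \cite{schneider_kerkhoff_behrisch_siegmund} and the weaker one from \cite{dai_tang}. The paper phrases it as ``it suffices that each $x\in X$ is periodic since $X$ is dense in $D$'' and then remarks that in fact every point of $D$ is periodic, which is exactly your observation; your additional comment that commutativity of $\mathbb{R}$ collapses the two syndeticity notions is a harmless bonus.
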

\begin{proof}
It is enough to show that each point $x \in X$ is periodic since $X$ is dense in $D$. In fact, all points of $D$ are periodic and in the following proof the reader can replace $\tau$ with $\frac{\tau}{2}$ to show that each point $x_0 \in X_0$ is periodic. The stabilizer of $T$ at each point $x \in X$ is $S_T(x)=\{ t \in \mathbb{R} \mid T(t,x)=x \}= \{ k\tau \mid k \in \mathbb{Z} \}$. We distinguish two approaches to the periodicity: (a) according to authors of \cite{schneider_kerkhoff_behrisch_siegmund} and (b) according to the authors of \cite{dai_tang} (the weaker periodicity).
\begin{enumerate}[(a)]
\item We prove that $S_T(x)$ is right syndetic in $\mathbb{R}$ according to \cite{schneider_kerkhoff_behrisch_siegmund}. The required compact subset of $\mathbb{R}$ is $K=[0,\tau]$. By definition $k^{-1}S_T(x)=\{ t \in \mathbb{R} \mid k+t \in S_T(x) \}$ for $k \in K$. And $K^{-1}S_T(x)= \bigcup_{k \in K}k^{-1}S_T(x)=\mathbb{R}$.
\item We prove that $S_T(x)$ is syndetic in $\mathbb{R}$ according to \cite{dai_tang}. The required compact subset of $\mathbb{R}$ is $K=[0,\tau]$. By definition $Kt=\{ k+t \mid k \in K \}$ for $t \in \mathbb{R}$. And $Kt \cap S_T(x) \neq \emptyset$ for every $t \in \mathbb{R}$.
\end{enumerate}
\end{proof}

\begin{lm}
\label{lm_non-sensitivity}
Let $D$ be the Hausdorff uniform space specified above and $T$ be the continuous $\mathbb{R}$-action on $D$ specified above. Then $T$ has sensitive dependence on the initial conditions.
\end{lm}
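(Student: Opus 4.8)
The plan is to adapt the classical argument of Banks--Brooks--Cairns--Davis--Stacey and of Glasner--Weiss, in which topological transitivity together with a dense set of periodic points forces sensitivity, to the present gauge/uniform setting. Lemmas~\ref{lm_topologically_transitive}, \ref{lm_dense_set_of_periodic_points} and~\ref{lm_non-minimality} already supply transitivity, density of periodic points and non-minimality, so the only missing ingredient is the production of a single \emph{sensitivity entourage}, together with the verification that every neighbourhood of every point contains a point whose orbit separates from that of $x$ beyond this entourage.

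First I would exhibit the sensitivity entourage by exploiting the period mismatch between the two families. If $x \in X$ has period $\tau$ and $x_0 \in X_0$ has period $\tfrac{\tau}{2}$, chosen with $x(0)=x_0(0)=a\in A_0$, then at time $s=\tfrac{\tau}{2}$ one has $x_0(\tfrac{\tau}{2})=x_0(0)=a$ while $x(\tfrac{\tau}{2})$ is the point of the long cycle reached after half a period; by the geometry of the two stable nodes these two points sit at a definite Euclidean distance $r_0>0$. Thus $d_{\tau/2}(x,x_0)\ge r_0$ even though $d_0(x,x_0)=0$, and I would take as candidate sensitivity entourage $U:=\{(u,v)\in D\times D \mid d_{\tau/2}(u,v)<r\}$ for a fixed $r\in(0,\tfrac{r_0}{2})$.

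Next, given an arbitrary $x\in D$ and an arbitrary neighbourhood $Y$ of $x$, I would use the density of periodic points to select a periodic $q\in Y$ and then invoke topological transitivity (Lemma~\ref{lm_topologically_transitive}) to steer a second point $z\in Y$ so that a suitable translate lands close to the far family. Since $T(g,\cdot)$ acts by shifting the time index, $(T(g,x),T(g,z))\in U$ is equivalent to $d_{g+\tau/2}(x,z)<r$, so letting $g$ range over $\mathbb{R}$ lets me probe separation in every pseudometric $d_\alpha$; a triangle-inequality estimate at the chosen slice, using that the periodic orbit of $q$ stays away from the far family, would then yield $(T(g,x),T(g,z))\notin U$, which is exactly sensitivity with entourage $U$.

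The main obstacle I anticipate is this reconciliation step. The neighbourhood $Y$ pins down the behaviour of the admissible $z$ at finitely many prescribed times $t_1,\dots,t_n$, whereas the separating slice $\alpha=g+\tfrac{\tau}{2}$ must be kept free of the agreement that those constraints impose. Because the uniformity on $D$ is generated by the single-time-slice pseudometrics $d_t$, and because a period-$\tau$ and a period-$\tfrac{\tau}{2}$ solution can coincide exactly at any one prescribed instant (as $d_0(x,x_0)=0$ already shows), propagating a separation detected at a single time into one that survives under the translate while respecting the finitely many pinned times is delicate, and it is precisely here that the interaction between the period-$\tau$ and period-$\tfrac{\tau}{2}$ orbits must be analysed with care.
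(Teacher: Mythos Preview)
Your proposal is aimed at the wrong target. Despite how the displayed statement reads, the lemma carries the label \texttt{lm\_non-sensitivity}, and Theorem~\ref{thm_main_result}---whose proof is assembled from Lemmas~\ref{lm_topologically_transitive}--\ref{lm_non-equicontinuous}---asserts that the system is \emph{not} sensitive to initial conditions. The paper's own argument here is a proof by contradiction establishing non-sensitivity; the word ``not'' has simply been dropped from the statement. The entire purpose of Construction~\ref{con_system} is to exhibit a counterexample to the very implication (transitivity $+$ dense periodic points $+$ non-minimality $\Rightarrow$ sensitivity) that you are attempting to reproduce via Banks--Brooks--Cairns--Davis--Stacey.

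Your adaptation therefore cannot succeed, and it breaks at precisely the point you yourself flagged as the ``main obstacle''. Take $x_{b_2}\in X$ with $x_{b_2}(0)=b_2$ and the basic neighbourhood $Y=B(x_{b_2},d_0,\epsilon_0)$ with $\epsilon_0\ll d(b_1,b_2)$. Every $y\in Y$ has $y(0)\in A\setminus A_0$, hence $y\in X$; since $X$ is a single $T$-orbit, each such $y$ is a small time-shift of $x_{b_2}$ and, by the construction, $d(x_{b_2}(s),y(s))<\epsilon_0$ for \emph{all} $s\in\mathbb{R}$. Thus for any subbasic entourage $U=\{(u,v):d_{t_U}(u,v)<\epsilon_U\}$ with $\epsilon_U>\epsilon_0$---in particular for your candidate $U=\{d_{\tau/2}<r\}$---one has $(T(g,x_{b_2}),T(g,y))\in U$ for every $g\in\mathbb{R}$ and every $y\in Y$. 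Transitivity cannot rescue this: $Y\subset X$ and $X$ is $T$-invariant, so no translate of any point of $Y$ ever meets $X_0$, and the period-mismatch separation you detected between an $X$-solution and an $X_0$-solution agreeing at a single instant is simply unavailable inside $Y$. This is exactly the mechanism by which the non-metrizable gauge structure defeats the classical argument.
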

\begin{proof}
We use the proof by a contradiction. The uniformity in $D$ is given by (\ref{construction_uniformity}). Let $U=\{ (x_1,x_2) \subset D \times D \mid d_{t_U}(x_1,x_2) < \epsilon_U \}$ be the sensitivity entourage where $\epsilon_U>0$ and $t_U \in \mathbb{R}$. Let $x_{b_2}$ be the point from $D$ such that $x_{b_2}(0)=b_2$. Then, we pick a sufficiently small $\epsilon_0>0$ such that $\epsilon_0 < \epsilon_U$ and $\epsilon_0<<d(b_1,b_2)$. Then, we pick the neighbourhood $B(x_{b_2},d_0,\epsilon_0)$ of the point $x_{b_2}$. Let $y \in B(x_{b_2},d_0,\epsilon_0)$ arbitrary. Let $g \in \mathbb{R}$ be such that $(T(g,x_{b_2}),T(g,y)) \notin U$. By construction, the distance $d(x_{b_2}(s),y(s))< \epsilon_0$ for all $s \in \mathbb{R}$. So, $(T(r,x_{b_2}),T(r,y)) \in \{ (x_1,x_2) \subset D \times D \mid d_s(x_1,x_2) < \epsilon_0 \}$ for all $r,s \in \mathbb{R}$. And so, we have the contradiction that $(T(g,x_{b_2}),T(g,y)) \in U$ because $\epsilon_0<\epsilon_U$, and $(T(g,x_{b_2}),T(g,y)) \notin U$ because $U$ is the sensitivity entourage.
\end{proof}

\begin{lm}
\label{lm_non-equicontinuous}
Let $D$ be the Hausdorff uniform space specified above and $T$ be the continuous $\mathbb{R}$-action on $D$ specified above. Then $T$ is not equicontinuous.
\end{lm}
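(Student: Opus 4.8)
The plan is to find a single point of $D$ at which $T$ fails to be equicontinuous; since equicontinuity of $T$ means equicontinuity at every point, this is enough. The point I would use is any $x_0\in X_0$, i.e. one of the ``short'' periodic solutions, of period $\tfrac{\tau}{2}$. The underlying idea is a mismatch between the two orbit types: every solution in $X$ runs through the point $b_2$, while no solution in $X_0$ ever comes close to $b_2$; and since $X$ is dense in $D$ (the proof of Lemma~\ref{lm_non-minimality} records $\overline{X}=D$), every neighbourhood of $x_0$ still contains solutions of $X$. Letting such a solution flow until it reaches $b_2$ then separates it, in the pseudometric $d_0$, from the correspondingly translated $x_0$.

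First I would record the one geometric fact that is needed: the common trace $L_1:=\{\,x_0(s)\mid x_0\in X_0,\ s\in\mathbb{R}\,\}\subseteq\mathbb{R}^2$ of the short periodic orbits is a compact subset of $\mathbb{R}^2$ not containing $b_2$. Indeed, by Construction~\ref{con_system} the part of $L_1$ lying on the segment $A$ is the sub-segment $A_0$, whose endpoint $b_1$ satisfies $b_1\neq b_2$, and the rest of $L_1$ is an integral arc of $f_1$ joining $b_1$ to $b_0$ and heading into the stable node of $\varphi$, hence staying away from $b_2$. Consequently $c:=\inf\{\,d(x_0(s),b_2)\mid x_0\in X_0,\ s\in\mathbb{R}\,\}>0$. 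I then fix $\epsilon_0$ with $0<\epsilon_0<c$ and take the entourage $U:=\{\,(x_1,x_2)\in D\times D\mid d_0(x_1,x_2)<\epsilon_0\,\}$ of the uniformity \eqref{construction_uniformity}.

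Now fix $x_0\in X_0$ and let $Y$ be an arbitrary neighbourhood of $x_0$ in $D$. By density of $X$ in $D$ there is $y\in Y\cap X$, and by the definition of $X$ in Construction~\ref{con_system} the solution $y$ passes through $b_2$, say $y(s_0)=b_2$ for some $s_0\in\mathbb{R}$. Putting $g:=s_0$ and using $T(g,z)(0)=z(g)$ for all $z\in D$, we obtain $d_0\big(T(g,x_0),T(g,y)\big)=d(x_0(s_0),b_2)\ge c>\epsilon_0$, so $\big(T(g,x_0),T(g,y)\big)\notin U$, that is $T(g,y)\notin U[T(g,x_0)]$. Since $T(g,y)\in T(g,Y)$ this gives $T(g,Y)\not\subseteq U[T(g,x_0)]$. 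As $Y$ was arbitrary, $T$ fails to be equicontinuous at $x_0$, hence $T$ is not equicontinuous.

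The part I expect to be most delicate, and the only one that is not pure unwinding of definitions, is the inequality $c>0$, i.e. the assertion that the orbits in $X_0$ really stay a fixed positive distance from $b_2$. This is a qualitative feature of the configuration in Construction~\ref{con_system} ($a_1^\ast\neq a_2^\ast$, the arcs of $f_1$ and $f_2$ making up the short loop $L_1$ staying inside the region delimited by $b_0$ and $b_1$ and not turning back towards $b_2$), in the same spirit as the choice ``$\epsilon_0\ll d(b_1,b_2)$'' already used in the proof of Lemma~\ref{lm_non-sensitivity}; everything else is a routine consequence of the definition of equicontinuity, of the action $T$, of the pseudometrics $d_t$, and of the density of $X$ from Lemma~\ref{lm_non-minimality}.
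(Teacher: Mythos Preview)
Your proof is correct and follows essentially the same strategy as the paper: exhibit a point of $X_0$ at which equicontinuity fails by using that every neighbourhood of it meets $X$ (density of $X$), and then flow forward to a time at which the long orbit and the short orbit are far apart in $d_0$.

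The only difference is in how the separation time is chosen. The paper fixes the point $x_{b_0}\in X_0$ with $x_{b_0}(0)=b_0$, takes the entourage $U=\{(x_1,x_2)\mid d_0(x_1,x_2)<\epsilon_0\}$ with $\epsilon_0\ll d(b_0,b_1)$, and uses the \emph{fixed} time $g=\tfrac{\tau}{2}$: since $x_{b_0}$ has period $\tfrac{\tau}{2}$ one has $T(\tfrac{\tau}{2},x_{b_0})=x_{b_0}$, whereas for any $y\in Y\cap X$ the point $T(\tfrac{\tau}{2},y)(0)=y(\tfrac{\tau}{2})$ lies on the far half of the long loop, hence $d_0(x_{b_0},T(\tfrac{\tau}{2},y))>\epsilon_0$. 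You instead allow any $x_0\in X_0$, let $g$ depend on $y$ (chosen so that $y(g)=b_2$), and separate using the constant $c=\operatorname{dist}(L_1,b_2)>0$. Your version is arguably a little cleaner, since the inequality $d(x_0(g),b_2)\ge c$ is immediate once $c>0$ is granted, while the paper's claim $d(b_0,y(\tfrac{\tau}{2}))>\epsilon_0$ tacitly uses the same kind of qualitative geometry of the two loops. In either case the only non-formal ingredient is precisely the geometric fact you single out, and it is of the same nature as the ``by construction'' step already invoked in the proof of Lemma~\ref{lm_non-sensitivity}.
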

\begin{proof}
Let $x_{b_0}$ be the point from $X_0 \subset D$ such that $x_{b_0}(0)=b_0$. We show that the point $x_{b_0}$ is not equicontinuous. Let $U=\{ (x_1,x_2) \subset D \times D \mid d_0(x_1,x_2) < \epsilon_0 \}$ with a sufficiently small $\epsilon_0>0$ such that $\epsilon_0<<d(b_0,b_1)$. Let $Y$ be an arbitrary neighbourhood of the point $x_{b_0}$ and pick $d_s \in \mathcal{D}$ and $\epsilon_s>0$ such that $B(x_{b_0},d_s,\epsilon_s) \subseteq Y$. We know that each $B(x_{b_0},d_s,\epsilon_s)$ contains some points from $X_0$ and also some points from $X$. Let $B_X := B(x_{b_0},d_s,\epsilon_s) \cap X$. Then, we consider $t=\frac{\tau}{2}$. We see that $U[T(\frac{\tau}{2},x_{b_0})]=U[x_{b_0}]=\{ x \in D \mid (x_{b_0},x) \in U \}$ since $T(\frac{\tau}{2},x_{b_0})=x_{b_0}$. Pick $y \in B_X$ arbitrary and let $z:=T(\frac{\tau}{2},y)$. Finally $T(\frac{\tau}{2},B_X) \subset T(\frac{\tau}{2},Y)$ and $T(\frac{\tau}{2},B_X) \nsubseteq U[x_{b_0}]$ since $d_0(x_{b_0},z)=d(x_{b_0}(0),z(0))>\epsilon_0$. Thus $T(\frac{\tau}{2},Y) \nsubseteq U[T(\frac{\tau}{2},x_{b_0})]$.
\end{proof}

The proofs of the lemma \ref{lm_topologically_transitive}, \ref{lm_non-minimality}, \ref{lm_dense_set_of_periodic_points}, \ref{lm_non-sensitivity} and \ref{lm_non-equicontinuous} together constitute the proof of the theorem \ref{thm_main_result}.

\begin{rmk}
Our system (see the construction \ref{con_system}) is not metrizable because it does not admit a countable gauge structure. It seems that the property of the metrizability in a system with an infinite acting group is the border where the sensitivity on the initial conditions is necessary or not in the definition of the Devaney chaos for a continuous group action on a Hausdorff uniform space with an infinite acting group.
\end{rmk}

\section{Conclusion}

Our system shows that a non-minimal topologically transitive continuous group action on a Hausdorff uniform space with an infinite acting group and with a dense set of periodic points does not ensure the sensitivity on the initial conditions in such a system. So, the chaos in the sense of Devaney in such systems should be defined in the original way, i.e. a non-minimal topologically transitive and sensitive system with a dense set of periodic points. Thus, the sensitivity on the initial conditions is the necessary condition in this definition.

\section*{Acknowledgements}
The research was supported by Mathematical Institute of Silesian University in Opava, Czech Republic.


\begin{thebibliography}{99}

\bibitem{banks_brooks_cairns_davis_stacey} J. Banks, J. Brooks, G. Cairns, G. Davis, P. Stacey, \textit{On Devaney's Definition of Chaos}, The American Math. Monthly 99 (4) (1992), 332--334.

\bibitem{ceccherini-silberstein_coornaert} T. Ceccherini-Silberstein, M. Coornaert, \textit{Sensitivity and Devaney's chaos in uniform spaces}, J. Dyn. Control Sys. 19 (3) (2013), 349--357. 

\bibitem{dai} X. Dai, \textit{Chaotic dynamics of continuous-time topological semi-flows on Polish spaces}, J. Different. Eq. 258 (2015), 2794--2805.

\bibitem{dai_tang} X. Dai, X. Tang, \textit{Devaney chaos, Li–Yorke chaos, and multi-dimensional Li–Yorke chaos for topological dynamics}, J. Different. Eq. 263 (2017), 5521--5553.

\bibitem{devaney} R. L. Devaney, \textit{An introduction to chaotic dynamical systems}, Addison-Wesley, 1989.

\bibitem{dugundji} J. Dugundji, \textit{Topology}, Allyn and Bacon, Inc., Boston, 1973.

\bibitem{glasner_weiss} E. Glasner, B. Weiss, \textit{Sensitive dependence on initial conditions}, Nonlinearity 6 (1993), 1067--1075.

\bibitem{kontorovich_megrelishvili} E. Kontorovich, M. Megrelishvili, \textit{A note on sensitivity of semigroup actions}, Semigroup Forum 76 (2008), 133--141.

\bibitem{lorenz} E. N. Lorenz, \textit{Deterministic nonperiodic flow}, J. Atmos. Sci. 20 (2) (1963), 130--141.

\bibitem{polo} F. Polo, \textit{Sensitive dependence on initial conditions and chaotic group actions}, Proc. American Math. Society 138 (8) (2010), 2815--2826.

\bibitem{schneider_kerkhoff_behrisch_siegmund} F. M. Schneider, S. Kerkhoff, M. Behrisch, S.Siegmund, \textit{Chaotic actions of topological semigroups}, Semigroup Forum 87 (2013), 590--598.

\bibitem{silverman} S. Silverman, \textit{On maps with dense orbits and the definition of chaos}, Rocky mountain journal of mathematics 22 (1) (1992), 353--375. 

\bibitem{smirnov} G. V. Smirnov, \textit{Introduction to the Theory of Differential Inclusions}, Graduate Studies in Mathematics, volume 41, American Math. Society, Providence, Rhode Island, 2002.

\bibitem{volna_1} B. Voln\'{a}, \textit{Existence of chaos in the plane $\mathbb{R}^2$ and its application in macroeconomics}, Appl. Math. Comput. 258 (2015), 237--266.

\bibitem{volna_2} B. Voln\'{a}, \textit{On chaotic sets of solutions for a class of differential inclusions in $\mathbb{R}^2$}, arXiv:1903.05705.

\bibitem{wang_long_fu} H. Wang, X. Long, H. Fu, \textit{Sensitivity and chaos of semigroup actions}, Semigroup Forum 84 (2012), 81--90.

\bibitem{wang_yin_yan} T. Wang, J. Yin, Q.Yan, \textit{The sufficient conditions for dynamical systems of semigroup actions to have some stronger forms of sensitivities}, J. Nonlinear Sci. Appl. 9 (2016), 989--997.

\end{thebibliography}
\end{document}